\newtheorem{Theorem}{\indent Theorem}[section]
\newtheorem{Lemma}[Theorem]{\indent Lemma}
\newtheorem{Corollary}[Theorem]{\indent Corollary}
\newtheorem{Proposition}[Theorem]{\indent Proposition}
\theoremstyle{remark}
\begin{document}
\centerline{\bf On sums of $k$-th powers with almost equal primes
}
\bigskip
\centerline{\small  Wei Zhang}
\bigskip

\textbf{Abstract}\
 For ``almost all" sufficiently large $N,$ satisfying necessary congruence conditions and $k\geq 2$,  we show that
  there is an {\bf asymptotic formula} for the number of solutions of the equation
\begin{align*}
\begin{split}
&N=p_{1}^{k}+p_{2}^{k}+\cdots+p_{s}^{k},
\\
&\left|p_{i}-( N/s)^{1/k}\right|\leq (N/s)^{\theta/k},\  (1\leq i\leq s)
\end{split}
\end{align*}
with
\begin{align*}
s\geq \frac{k(k+1)}{2}+1\ \ \textup{and}\ \
\theta\geq
  {\bf 2/3}+\varepsilon.
\end{align*}

This  enlarges the effective range of $s$ for which can be obtained by the method of M\"{a}tomaki and Xuancheng Shao \cite{MS}.
[Discorrelation  between primes in short intervals and polynomial phase, Int. Math. Res. Not. IMRN 2021, no. 16, 12330-12355.]

The idea is to avoid using the exponential sums (\ref{MS0}) and Vinogradov mean value theorems in Lemma \ref{K} simultaneously. And the main new ingredient is from Kumchev and Liu \cite{KL} (see Lemma \ref{t2}).

\textbf{Keywords}\  Waring-Goldbach problem; exponential sums over primes in short intervals; circle method.
\medskip

\textbf{2000 Mathematics Subject Classification}\  11P32, 11P05, 11L20, 11P55

\bigskip
\bigskip
\numberwithin{equation}{section}

\section{Introduction}
For $k\in \mathbb{Z}^{+}$ and a prime $p,$ let $\tau=\tau(k,p)$ be the largest integer such that $p^{\tau}|k.$ Define
\[
\eta(k,p)=\tau+2
\]
when $p=2$ and $\tau>0,$ and $\eta(k,p)=\tau+1$ otherwise. Then define
\[
R=R(k)=\prod_{(p-1)|k}p^{\eta}.
\]
 For sufficiently large $N,$ $N\in \mathcal{N}_{s}=\{N\in \mathbb{N}: N\equiv s ( \mathrm{mod}\ R)  \ \textup{and}\ 9\nmid N,\ \textup{for}\ k=3\ \textup{and}\ s=7 \}$,
\begin{align}\label{1.1}
\begin{split}
&N=p_{1}^{k}+p_{k}^{k}+\cdots+p_{s}^{k},
\\
&\left|p_{i}-( N/s)^{1/k}\right|\leq (N/s)^{\theta/k},\  (1\leq i\leq s)
\end{split}
\end{align}
was investigated by many experts. For example, see \cite{K2,KL,LLZ,MS,Sa,WW}.
In \cite{K2}, this was first studied for $s\geq7$ and $k=2$. Recently, this was improved by many authors. For example, see \cite{KL,MS,Sa,WW}.

In \cite{KL}, Kumchev and Liu proved that for sufficiently large $N,$ $N\in \mathcal{N}_{s}=\{N\in \mathbb{N}: N\equiv s ( \mathrm{mod}\ R)\},$ there are solutions of the equation (\ref{1.1})
with $s\geq k^{2}+k+1$ and $\theta\geq31/40+\varepsilon,$ which gives an asymptotic lower bound of the correct order for the number of the representations.
They also proved that ``almost all" sufficiently large $N,$ $N\in \mathcal{N}_{s}=\{N\in \mathbb{N}: N\equiv s ( \mathrm{mod}\ R)\}$, (\ref{1.1})
is solvable  with $s\geq \frac{k(k+1)}{2}+{\bf 1}$ and $\theta\geq31/40+\varepsilon.$
Next we will introduce the definition of
``almost all".
For primes $p_{i}$, define
\[E(N,s,x^{k-1}y)=|\mathcal{E}(N,s,x^{k-1}y)|,\]
where
\begin{align*}
&\mathcal{E}(N,s,x^{k-1}y)\\
&=
\left\{n:n\in(N,N+x^{k-1}y]\cap\mathcal{N}_{s},\ \varrho(n)-\mathfrak{S}(n)\mathfrak{J}(n)\geq y^{s-1}x^{1-k}(\log x)^{-1}\right\},
\end{align*}
where $\varrho(n)$ is the number of representations of $n$ in the form of (\ref{1.1}) and $\mathfrak{S}(n), \mathfrak{J}(n)$ are defined by ($\ref{asy}).$
If
\[
\lim_{N\longrightarrow\infty}\frac{E(N,s,x^{k-1}y)}{x^{k-1}y}=0
\ \ \left(\textup{i.e.}\ E(N,s,x^{k-1}y)=o\left(x^{k-1}y\right)\right),\]
then we say that for ``almost all" sufficiently large $N,$ $N\in \mathcal{N}_{s},$ there is an {\bf asymptotic formula} for the number of solutions of the equation (\ref{1.1})
with certain $s$ and $\theta.$

Recently, these results were  improved  by Matom\"aki-Shao \cite{MS} and  Salmensuu \cite{Sa} respectively.
Matom\"aki-Shao \cite{MS} use the exponential sums. In order to use the exponential sums
\begin{align}\label{MS0}
\sum_{x<n\leq x+x^{2/3+\varepsilon}}\Lambda(n)e(\alpha n^{k}),
\end{align}
they need more variables to apply a better type Vinogradov mean value theorem related to \cite{D} instead of Lemma \ref{K}.
 By using the Harman sieve, Salmensuu  got some even better results.
Unfortunately, both their results do not contain the situation of ``almost all" for $s\leq k^{2}+k$. However,  the method of Matom\"aki-Shao \cite{MS} implies that
``almost all" sufficiently large $N,$ $N\in \mathcal{N}_{s}=\{N\in \mathbb{N}: N\equiv s ( \mathrm{mod}\ R)\},$
there is an {\bf asymptotic formula} for the number of solutions of the equation (\ref{1.1})
with $s\geq (k^{2}+k)/2+2$ and $\theta\geq2/3+\varepsilon.$
This improves the previous results for the aspect of $\theta$ but with worse $s$ (as cited in \cite{MS}).
The aim of this paper is to reduce the $\frac{k(k+1)}{2}+{\bf2}$ to $\frac{k(k+1)}{2}+{\bf1}.$ Then the range of $s$ is the same as previous results in \cite{WW,KL}.
The idea is to avoid using the exponential sums (\ref{MS0}) and the Vinogradov mean value theorems in Lemma \ref{K} simultaneously.

\begin{Theorem}\label{t3}
For ``almost all" sufficiently large $N,$ $N\in \mathcal{N}_{s}=\{N\in \mathbb{N}: N\equiv s ( \mathrm{mod}\ R)\},$ there is an {\bf asymptotic formula} for the number of solutions of the equation
\begin{align*}
\begin{split}
&N=p_{1}^{k}+p_{2}^{k}+\cdots+p_{s}^{k},
\\
&\left|p_{i}-( N/s)^{1/k}\right|\leq (N/s)^{\theta/k},\  (1\leq i\leq s)
\end{split}
\end{align*}
with
\[
s\geq\frac{k(k+1)}{2}+1\ \ \textup{and}\ \  \ \theta\geq2/3+\varepsilon.
\]
\end{Theorem}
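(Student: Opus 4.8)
The plan is to attack the asymptotic count by the Hardy--Littlewood circle method, using the short-interval structure to pass from the single $k$-th power equation to a Vinogradov-type system. Set $x=(N/s)^{1/k}$ and $y=x^{\theta}$, and introduce the exponential sum over primes in the short interval
\[
f(\alpha)=\sum_{|p-x|\leqslant y}(\log p)\,e(\alpha p^{k}),
\]
so that the weighted representation number is $\varrho(N)=\int_{0}^{1}f(\alpha)^{s}e(-\alpha N)\,\dif\alpha$. I would dissect $[0,1)$ into major arcs $\mathfrak{M}$, consisting of $\alpha$ near rationals $a/q$ with $q$ bounded by a small power of $y$, and the complementary minor arcs $\mathfrak{m}$. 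Throughout, $n$ ranges over the window $(N,N+x^{k-1}y]\cap\mathcal{N}_{s}$, on which $x$ and $y$ may be treated as essentially constant.

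On the major arcs I would use a short-interval prime number theorem in arithmetic progressions (available in the present range of $\theta$) to replace $f(\alpha)$ by its expected main term, producing the singular series $\mathfrak{S}(n)$ and singular integral $\mathfrak{J}(n)$ of (\ref{asy}); the usual computations give $\mathfrak{S}(n)\mathfrak{J}(n)\asymp y^{s-1}x^{1-k}$ with $\mathfrak{S}(n)\gg1$ for $n\in\mathcal{N}_{s}$, together with $\int_{\mathfrak{M}}f^{s}e(-\alpha n)\,\dif\alpha=\mathfrak{S}(n)\mathfrak{J}(n)+o(y^{s-1}x^{1-k})$. This is routine and supplies the asserted main term uniformly in $n$.

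The decisive step is the minor arcs, where the ``almost all'' formulation enters. Writing $R_{\mathfrak{m}}(n)=\int_{\mathfrak{m}}f(\alpha)^{s}e(-\alpha n)\,\dif\alpha$, Bessel's inequality over the integers $n$ in the window yields
\[
\sum_{N<n\leqslant N+x^{k-1}y}|R_{\mathfrak{m}}(n)|^{2}\leqslant\int_{\mathfrak{m}}|f(\alpha)|^{2s}\,\dif\alpha,
\]
so it suffices to prove $\int_{\mathfrak{m}}|f|^{2s}\,\dif\alpha=o\bigl(y^{2s-1}x^{1-k}(\log x)^{-2}\bigr)$; by Chebyshev this forces $|R_{\mathfrak{m}}(n)|<y^{s-1}x^{1-k}(\log x)^{-1}$ off a set of $n$ of size $o(x^{k-1}y)$, which is exactly the exceptional set $\mathcal{E}(N,s,x^{k-1}y)$. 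To estimate the integral I would substitute $p=x+m$, $|m|\leqslant y$, and expand $p^{k}=\sum_{j}\binom{k}{j}x^{k-j}m^{j}$, so that $f$ becomes a sum over $m$ carrying the $k$ frequencies $\binom{k}{j}\alpha x^{k-j}$; this is the mechanism tying the problem to Vinogradov's mean value theorem (Lemma \ref{K}).

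The delicate point, and where I expect the main obstacle, is to reach the critical number of variables $s=\frac{k(k+1)}{2}+1$ without spending an extra one. I would split, by H\"older,
\[
\int_{\mathfrak{m}}|f|^{2s}\,\dif\alpha\leqslant\Bigl(\sup_{\alpha\in\mathfrak{m}}|f(\alpha)|\Bigr)^{2s-2u}\int_{0}^{1}|f(\alpha)|^{2u}\,\dif\alpha,
\]
and here lies the tension exploited by Matom\"aki--Shao: applying the pointwise minor-arc bound for (\ref{MS0}) and the sharp Vinogradov input of Lemma \ref{K} both at full strength costs one variable. Instead I would invoke the Kumchev--Liu estimate of Lemma \ref{t2}, which already folds the primality and the short-interval structure into a single mean value, to control one of the two factors; this avoids using (\ref{MS0}) and Lemma \ref{K} simultaneously at their critical thresholds and recovers the missing variable. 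The crux is the accounting of exponents: since $\theta>1/2$, the single diagonal equation is strictly weaker than the full Vinogradov system, so the off-diagonal $m$-solutions must be tracked carefully to confirm that the resulting bound genuinely beats $y^{2s-1}x^{1-k}(\log x)^{-2}$ with exactly $s=\frac{k(k+1)}{2}+1$ terms. Combining this minor-arc bound with the major-arc main term then yields the asymptotic formula for almost all $N\in\mathcal{N}_{s}$.
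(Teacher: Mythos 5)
Your setup (the prime sum $f$, the arcs, Huxley's theorem on the major arcs, Bessel plus Chebyshev to reduce everything to bounding $\int_{\mathfrak{m}}|f(\alpha)|^{2s}\,d\alpha$ by $o\bigl(y^{2s-1}x^{1-k}(\log x)^{-2}\bigr)$) matches the paper, and you correctly locate the difficulty: at $\theta=2/3+\varepsilon$ the only pointwise minor-arc bound for the \emph{prime} sum is the logarithmic saving of Lemma \ref{MS}, and combining it with Lemma \ref{K} via H\"older costs a variable. But your resolution of that tension is where the gap lies. You propose to keep the H\"older splitting $\int_{\mathfrak{m}}|f|^{2s}\leq(\sup_{\mathfrak{m}}|f|)^{2s-2u}\int_0^1|f|^{2u}$ and to ``invoke Lemma \ref{t2} to control one of the two factors.'' This cannot work as stated: Lemma \ref{t2} is a pointwise Weyl-type dichotomy for the \emph{unweighted} sum $\sum_{n\in A}e(\alpha n^k)$ over all integers of the short interval, and it says nothing about the prime-supported sum $f$; no power-saving pointwise minor-arc bound for $f$ is known at $\theta=2/3+\varepsilon$ (that is exactly why Matom\"aki--Shao's saving is only logarithmic, and why Kumchev--Liu needed $\theta\geq 31/40$). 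So neither factor in your splitting is accessible to Lemma \ref{t2}, and your outline collapses back to the accounting that, as you yourself note, loses one variable.

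The paper's actual mechanism, absent from your proposal, is the bilinear step of Lemma \ref{l1}. One writes $\int_{\mathfrak{W}}|h|^{s}\,d\alpha=\sum_n(\log n)\int_{\mathfrak{W}}e(\alpha n^k)|h(\alpha)|^{s-2}h(-\alpha)\,d\alpha$, applies Cauchy--Schwarz in the outer variable $n$, and expands the square; after taking absolute values the $n$-sum may be extended by positivity to all integers of $(x,x+y]$, so the inner object becomes the complete unweighted Weyl sum $f(\alpha-\beta)=\sum_{x<n\leq x+y}e((\alpha-\beta)n^k)$. It is to \emph{this} sum that the Kumchev--Liu dichotomy (Lemma \ref{t2}) is applied: on its major set one gets the bound $w_k(q)y\bigl(1+x^{k-1}y|\alpha-\beta-a/q|\bigr)^{-1}$, whose $L^2$ contribution $\mathcal{J}_0$ is controlled by the weighted mean value of Lemma \ref{Y} with $D=yx^{k-1}$, and here Lemma \ref{MS} is used only at logarithmic strength; on the complement one gets the power saving $y^{1-\rho+\varepsilon}$, which multiplies $K(2s-2)$, an exponent already $\geq k^2+k$ when $s\geq\frac{k(k+1)}{2}+1$, so Lemma \ref{K} applies there with the factor $y^{-\rho}$ left over to spare. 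This is precisely how the two critical inputs are decoupled: the prime-sum information is never asked for more than a log saving, and Vinogradov is applied at exponent $2s-2$ rather than $2s$. Without this Cauchy--Schwarz/extension-to-integers step (or some substitute for it), your argument does not close at $s=\frac{k(k+1)}{2}+1$.
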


In fact, such an idea can also be used to  enlarge the effective range of $s$ of \cite{MS}.
Though there are some other results in \cite{Sa} with much better $\theta.$ However, such type results cannot be used to give something related to  {\bf asymptotic formulas} for the number of solutions of the equation (\ref{1.1}). By similar arguments, one can get the following corollary.

\begin{Corollary}\label{t4}
For all sufficiently large $N,$ $N\in \mathcal{N}_{s}=\{N\in \mathbb{N}: N\equiv s ( \mathrm{mod}\ R)\},$ there is an {\bf asymptotic formula} for the number of solutions of the equation
\begin{align*}
\begin{split}
&N=p_{1}^{k}+p_{2}^{k}+\cdots+p_{s}^{k},
\\
&\left|p_{i}-( N/s)^{1/k}\right|\leq (N/s)^{\theta/k},\  (1\leq i\leq s)
\end{split}
\end{align*}
with
\[
s\geq k(k+1)+1\ \ \textup{and}\ \  \ \theta\geq2/3+\varepsilon.
\]
\end{Corollary}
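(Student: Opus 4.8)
The plan is to run the Hardy--Littlewood circle method in exactly the form used for Theorem \ref{t3}, replacing the mean-square (averaging over $N$) treatment of the minor arcs by a pointwise bound that is uniform in $N$. Write $x=(N/s)^{1/k}$ and $y=(N/s)^{\theta/k}$, and put
\[
f(\alpha)=\sum_{|p-x|\le y}(\log p)\,e(\alpha p^{k}),
\]
so that the weighted number of solutions of (\ref{1.1}) is $\int_{0}^{1}f(\alpha)^{s}e(-\alpha N)\,\dif\alpha$. First I would fix a Farey dissection into major arcs $\mathfrak{M}$ and minor arcs $\mathfrak{m}$ with the same parameters as in the proof of Theorem \ref{t3}. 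On $\mathfrak{M}$ the standard local analysis yields the main term $\mathfrak{S}(N)\mathfrak{J}(N)$, with $\mathfrak{S}(N)\gg 1$ for $N\in\mathcal{N}_{s}$ (this is where the congruence condition defining $\mathcal{N}_{s}$ enters) and $\mathfrak{J}(N)\asymp y^{s-1}x^{1-k}$. Since this part does not see the minor arcs, it is identical to the major-arc analysis already carried out for Theorem \ref{t3} and may be quoted verbatim.

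The substance is the minor-arc estimate, and here the passage from ``almost all'' to ``all'' $N$ costs a factor of two in the number of variables. Instead of the second moment in $N$ (which by orthogonality would reduce the problem to $\int_{\mathfrak{m}}|f|^{2s}$), I would bound the single integral directly by
\[
\int_{\mathfrak{m}}|f(\alpha)|^{s}\,\dif\alpha\;\le\;\Big(\sup_{\alpha\in\mathfrak{m}}|f(\alpha)|\Big)^{\,s-k(k+1)}\int_{0}^{1}|f(\alpha)|^{k(k+1)}\,\dif\alpha,
\]
which is legitimate precisely because $s\ge k(k+1)+1$. For the even moment $\int_{0}^{1}|f|^{k(k+1)}\,\dif\alpha$ --- a Vinogradov-type system in $k(k+1)/2$ pairs of almost-equal prime variables, i.e. at the critical exponent --- I would invoke Lemma \ref{t2} of Kumchev and Liu, which supplies exactly this mean value over primes in short intervals and, for $\theta\ge 2/3+\varepsilon$, gives $\int_{0}^{1}|f|^{k(k+1)}\,\dif\alpha\ll y^{k(k+1)-1}x^{1-k}(\log x)^{O(1)}$ (the off-diagonal main term dominating the diagonal in this range). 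For the supremum I would use the exponential-sum estimate (\ref{MS0}) of Matom\"aki--Shao, which for $\theta\ge 2/3+\varepsilon$ delivers a power saving $\sup_{\mathfrak{m}}|f|\ll y^{1-\delta}$ for some $\delta=\delta(k)>0$. The whole point is that these two inputs are now used \emph{separately}: Lemma \ref{t2} alone controls the moment, taking the place of Lemma \ref{K}, so the short-interval exponential sum (\ref{MS0}) is never combined with the Vinogradov mean value of Lemma \ref{K} in a single estimate, and one thereby avoids the variable inflation that forced \cite{MS} to require $s\ge(k^{2}+k)/2+2$.

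Combining the two inputs gives
\[
\int_{\mathfrak{m}}|f|^{s}\,\dif\alpha\;\ll\;\big(y^{1-\delta}\big)^{\,s-k(k+1)}\,y^{k(k+1)-1}x^{1-k}(\log x)^{O(1)}\;=\;y^{\,s-1-\delta(s-k(k+1))}x^{1-k}(\log x)^{O(1)},
\]
and since $s\ge k(k+1)+1$ the surplus exponent $\delta(s-k(k+1))\ge\delta>0$ is strictly positive, so the right-hand side is $o\big(y^{s-1}x^{1-k}\big)=o\big(\mathfrak{S}(N)\mathfrak{J}(N)\big)$ for every sufficiently large $N\in\mathcal{N}_{s}$. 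Adding the major- and minor-arc contributions then yields the asserted asymptotic formula for all such $N$. I expect the main obstacle to be making the saving genuinely uniform in $N$: one must verify that (\ref{MS0}) furnishes the supremum bound with a fixed $\delta$ across the entire minor-arc set $\mathfrak{m}$, and that the exponents of $y$ and $x$ produced by Lemma \ref{t2} combine with $(y^{1-\delta})^{s-k(k+1)}$ to fall strictly below the main term $y^{s-1}x^{1-k}$ --- the threshold $\theta\ge 2/3+\varepsilon$ being exactly the point at which (\ref{MS0}) first yields such a power saving while the mean value still has the expected order.
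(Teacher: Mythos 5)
Your overall skeleton (drop the Bessel/averaging step, bound $\int_{\mathfrak{m}}|f|^{s}\,\mathrm{d}\alpha$ pointwise in $N$ by H\"older against a sup bound and a full-circle mean value) is the right shape for passing from ``almost all'' to ``all'' $N$, but both analytic inputs you feed into it are misquoted, and with the correct versions the argument collapses. First, Lemma \ref{MS} (Matom\"aki--Shao) does \emph{not} give $\sup_{\alpha\in\mathfrak{m}}|f(\alpha)|\ll y^{1-\delta}$. It says only that $|f(\alpha)|\geq y(\log x)^{-A}$ forces $\alpha$ into a set of log-power major arcs; since the minor arcs here are cut at $P=(\log x)^{A}$, the conclusion is merely $\sup_{\mathfrak{m}}|f|\leq y(\log x)^{-B}$, a logarithmic saving. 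No power-saving estimate for $\sum_{x<n\leq x+y}\Lambda(n)e(\alpha n^{k})$ is known at $y=x^{2/3+\varepsilon}$; the need for such power savings is exactly why the all-$N$ theorem of Kumchev--Liu \cite{KL} requires $\theta\geq 31/40+\varepsilon$, and obtaining one at $2/3+\varepsilon$ would trivialize both \cite{MS} and this paper. Second, Lemma \ref{t2} is not a mean value theorem at all: it is a pointwise Weyl-type dichotomy for the \emph{unweighted} sum $\sum_{n\in A}e(\alpha n^{k})$ (either a power saving, or a major-arc approximation with the weight $w_{k}(q)$). The mean value you actually have at exponent $k(k+1)$ is Lemma \ref{K}, and it loses a factor $x^{\varepsilon}$, not $(\log x)^{O(1)}$.

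Once the inputs are corrected, your H\"older step yields $\bigl(y(\log x)^{-B}\bigr)^{s-k^{2}-k}\cdot y^{k^{2}+k-1}x^{1-k+\varepsilon}=y^{s-1}x^{1-k}\cdot x^{\varepsilon}(\log x)^{-B(s-k^{2}-k)}$, and the factor $x^{\varepsilon}$ swamps any power of $\log x$, so the minor-arc contribution is not even $o$ of the main term. This is precisely the obstruction the paper is organized around: the abstract states that the point is ``to avoid using the exponential sums (\ref{MS0}) and Vinogradov mean value theorems in Lemma \ref{K} simultaneously,'' which is exactly what your proposal does. The paper's route for the Corollary is instead to run Proposition \ref{W1} with $t=s$ (whence $s\geq k(k+1)+1$ rather than $k(k+1)/2+1$): Lemma \ref{l1} splits $\int_{\mathfrak{m}}|f|^{s}\,\mathrm{d}\alpha$ into two terms, so that the log-saving sup bound from Lemma \ref{MS} is only ever multiplied against the major-arc-type mean value of Lemma \ref{Y} (which loses only log powers), while the $x^{\varepsilon}$ loss of Lemma \ref{K} is only ever multiplied against the genuine power saving $y^{2-\rho+\varepsilon}$ supplied through Lemma \ref{t2}; since $y^{-\rho}x^{\varepsilon}\leq x^{-\varepsilon}$ for $\rho=4\varepsilon$ and $\theta\geq 2/3$, each term separately falls below the main term. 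That separation device is the paper's key new idea, and it is the ingredient your proposal is missing.
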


\section{Preliminaries}

For $v\in\mathbb{Z}^{+},$ let $w_{k}(q)$ be defined as
\begin{align}
w_{k}(p^{ku+v})=
\begin{cases}
kp^{-u-1/2}    &\textup{for}\ \  u\geq 0 \ \textup{and}\ \ v=1,\\
p^{-u-1}       &\textup{for}\ \  u\geq 0 \ \textup{and}\ \ 2\leq v\leq k.
\end{cases}
\end{align}
Then we will introduce two lemmas. These lemmas are from \cite{Yao1} (in Chinese) and \cite{KL} respectively.
\begin{Lemma}\label{Y}
Let $\log D\ll \log x$ and $1\leq\max\{x^{1/2}, M\}\leq y\leq x.$ Then there exist a constant $c_{0}>0$ uniformly for $\gamma\in \mathbb{R}$ such that
\[
\sum_{q\leq M}
\sum_{\mbox{\tiny$\begin{array}{c}
1\leq a\leq q\\
(a,q)=1\\
\end{array}$}}\int_{|\alpha-a/q|\leq1}\frac{w_{k}(q)^{2}|\sum_{x<p\leq x+y}(\log p)e(p^{k}(\alpha+\gamma))|^{2} }{1+D|\alpha-a/q|}d\alpha\ll y^{2}D^{-1}\log^{c_{0}}x.
\]
\end{Lemma}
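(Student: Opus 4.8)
The plan is to treat the double sum as a mean value estimate, extracting the main term from the ``diagonal'' pairs $p_{1}=p_{2}$ and showing that the remaining pairs are negligible; this is the standard route for such expressions in the circle method. Write $S(\alpha)=\sum_{x<p\le x+y}(\log p)\,e(p^{k}(\alpha+\gamma))$ and, for each $a/q$, substitute $\beta=\alpha-a/q$. Expanding the square gives
\[
|S(a/q+\beta)|^{2}=\sum_{x<p_{1},p_{2}\le x+y}(\log p_{1})(\log p_{2})\,e\!\big((p_{1}^{k}-p_{2}^{k})(a/q+\gamma+\beta)\big).
\]
Summing over $a$ with $(a,q)=1$ isolates the Ramanujan sum $c_{q}(h)=\sum_{(a,q)=1}e(ha/q)$ with $h=p_{1}^{k}-p_{2}^{k}$; here $c_{q}(0)=\phi(q)$ and $|c_{q}(h)|\le(q,h)$ in general. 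The surviving factor $e(h\gamma)$ is unimodular and is discarded upon taking absolute values, which is exactly what makes the final bound uniform in $\gamma$. The $\beta$-integral is controlled by the elementary estimate
\[
\Big|\int_{|\beta|\le 1}\frac{e(h\beta)}{1+D|\beta|}\,d\beta\Big|\ll\min\!\Big(\frac{\log D}{D},\ \frac{1}{|h|}\Big),
\]
where the first bound is the value at $h=0$ and the second follows from integration by parts in $\beta$.

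For the diagonal $p_{1}=p_{2}$ one has $h=0$, so the Ramanujan sum is $\phi(q)$, the integral is $\ll D^{-1}\log D$, and $\sum_{x<p\le x+y}(\log p)^{2}\ll y\log x$ (the hypothesis $y\ge x^{1/2}$ lets Brun--Titchmarsh give this without losing a logarithm). Since $w_{k}$ is multiplicative with $w_{k}(p)^{2}=k^{2}/p$, a standard estimate for sums of the shape $\sum_{q\le M}k^{2\omega(q)}$ yields $\sum_{q\le M}w_{k}(q)^{2}\phi(q)\ll M\log^{c_{1}}x$. Multiplying these contributions and invoking $M\le y$ gives a diagonal term
\[
\ll M\log^{c_{1}}x\cdot y\log x\cdot\frac{\log D}{D}\ll\frac{y^{2}}{D}\log^{c_{0}}x,
\]
which is the main term; the role of the hypothesis $M\le y$ is precisely to turn the surplus factor $M$ into one of the two powers of $y$.

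The off-diagonal pairs $p_{1}\ne p_{2}$ are the main obstacle. Using $|c_{q}(h)|\le(q,h)$ together with $(q,h)=\sum_{d\mid(q,h)}\phi(d)$, I would interchange the $q$- and $d$-summations and reduce to bounding, for each modulus $d$, the prime sum
\[
T(d)=\sum_{\substack{x<p_{1},p_{2}\le x+y\\ p_{1}\ne p_{2},\ d\mid p_{1}^{k}-p_{2}^{k}}}(\log p_{1})(\log p_{2})\min\!\Big(\frac{\log D}{D},\frac{1}{|p_{1}^{k}-p_{2}^{k}|}\Big),
\]
weighted by $\phi(d)\sum_{q\le M,\,d\mid q}w_{k}(q)^{2}$. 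Parametrising by $m=p_{1}-p_{2}$, so that $|p_{1}^{k}-p_{2}^{k}|\asymp kx^{k-1}|m|$, and applying a Brun--Titchmarsh/Selberg upper bound for the number of prime pairs $p,p+m$ in the interval (with the singular series $\mathfrak{S}(m)$, whose averages over $m$ are $O(\log^{c}x)$), one splits the minimum at the crossover $|m|\asymp D/(kx^{k-1}\log D)$. Carrying out this split shows that $T(1)\ll y^{2}D^{-1}\log^{c}x$ in every range of $D$, the $D$-dependence being produced exactly by the crossover; for general $d$ the congruence $d\mid p_{1}^{k}-p_{2}^{k}$ cuts $T(d)$ down by a factor $\approx g(d)/d$ with $g$ multiplicative and $g(d)\ll d^{\varepsilon}$. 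The final summation then collapses because $\sum_{d\mid q}\phi(d)g(d)/d\ll C^{\omega(q)}$ for a constant $C=C(k)$ and $\sum_{q\le M}w_{k}(q)^{2}C^{\omega(q)}\ll\log^{c}x$, leaving the whole off-diagonal $\ll y^{2}D^{-1}\log^{c_{0}}x$. I expect the delicate points to be the uniform control of $\sum_{q\le M}w_{k}(q)^{2}(q,h)$ --- whose pointwise size $|h|^{\varepsilon}$ must be recovered only on average over $(p_{1},p_{2})$ --- and the accurate counting of prime pairs subject to the congruence $d\mid p_{1}^{k}-p_{2}^{k}$, since it is there that the arithmetic of $k$-th powers modulo $d$ enters.
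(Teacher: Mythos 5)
Your proposal is correct in outline and could be completed, but it takes a genuinely different and much heavier route than the paper. The paper's proof (sketched there, with details in Yao's thesis and Zhao's work) never splits into diagonal and off-diagonal terms, never uses the oscillation of $e(h\beta)$, and needs no sieve estimates for prime pairs. Its one small trick is to use positivity to enlarge the coprime sum over $a$ to the full sum $\sum_{1\le a\le q}$, so that the geometric series detects the congruence exactly:
\[
\sum_{1\le a\le q}\Bigl|\sum_{x<p\le x+y}(\log p)e\bigl(p^{k}(\alpha+\gamma)\bigr)\Bigr|^{2}
\le q\sum_{\substack{x<p_{1},p_{2}\le x+y\\ p_{1}^{k}\equiv p_{2}^{k}\ (\mathrm{mod}\ q)}}(\log p_{1})(\log p_{2}).
\]
The congruence count is then handled crudely: for each $p_{2}$ and each of the $\ll d(q)^{c_{k}}$ roots $\zeta$ of $\zeta^{k}\equiv1\pmod q$, the prime $p_{1}$ lies in one residue class modulo $q$, hence takes $\ll y/q+1\ll y/q$ values (this is where $M\le y$ enters), giving $q\sum 1\ll y^{2}d(q)^{c_{k,1}}$ --- no Brun--Titchmarsh for pairs $p,p+m$, no singular series. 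The factor $D^{-1}$ comes entirely from the trivial bound $\int_{|\beta|\le1}(1+D|\beta|)^{-1}d\beta\ll D^{-1}\log D\ll D^{-1}\log x$, and the $q$-sum closes by $\sum_{q\le M}w_{k}(q)^{2}d(q)^{c_{k,2}}\ll(\log M)^{c_{k,3}}$ (Zhao's lemma, cited in the paper). Uniformity in $\gamma$ is automatic for the same reason you note. Your extra machinery buys nothing here: the diagonal alone already contributes $\asymp yMD^{-1}\log^{c}x$, which for $M$ close to $y$ is of the same order as the target bound, so exploiting cancellation in $\beta$ on the off-diagonal cannot improve the final estimate; it only has to not exceed it, and the trivial $\log D/D$ bound plus the residue-class count already achieves that.

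Moreover, the one place where your route is delicate is stated imprecisely. The claim that $d\mid p_{1}^{k}-p_{2}^{k}$ cuts $T(d)$ down by $g(d)/d$ with $g(d)\ll d^{\varepsilon}$ is not correct root-by-root once you have parametrised by $m=p_{1}-p_{2}$: for the root $\zeta=1$ the congruence constrains $m$ (forcing $d\mid m$) and leaves $p_{2}$ free, while for $\zeta\neq1$ it constrains $m$ modulo $\gcd(\zeta-1,d)$ and $p_{2}$ modulo $d/\gcd(\zeta-1,d)$. The saving $\rho_{k}(d)/d$ in the weighted sum only emerges after a joint $(m,p_{2})$ analysis against the weight $\min\bigl(\log D/D,\,1/(x^{k-1}|m|)\bigr)$, using that restricting $m$ to multiples of $d$ also thins the $m$-sum by $1/d$. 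This can all be carried out (and your crossover at $|m|\asymp D/(x^{k-1}\log D)$ correctly handles the regime of very large $D$, where the $1/|h|$ range is empty), but it is precisely the bookkeeping the paper's argument avoids.
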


\begin{proof}
One can refer to the case $k=3$ and $k=4$ in \cite{Yao} and \cite{YZ}, respectively.
And this is Lemma 2.1 in \cite{Yao1} (in Chinese). So we only sketch the proof.
For $\alpha=a/q+\gamma_{1},$ for the properties of geometric series, we have
\[
\sum_{1\leq a\leq q}\left|\sum_{x<p\leq x+y}(\log p)e(p^{k}(\alpha+\gamma))\right|^{2}\leq
q \sum_{x <p_{1},\ p_{2} \leq x+y \atop p_{1}^{k}\equiv p_{2}^{k}(\textup{mod}q)}(\log p_{1})(\log p_{2}).
\]
On the other hand, we have the estimate
\[
 q\sum_{x <p_{1},\ p_{2} \leq x+y,\ p_{1}^{k}\equiv p_{2}^{k}(\textup{mod}\ q)}1
 \ll y^{2}d(q)^{c_{k,1}}.
\]
As (see Lemma 2.1 in \cite{Zh})
\[
\sum_{q\leq M}w_{k}(q)^{2}d(q)^{c_{k,2}}\ll (\log M)^{c_{k,3}}.
\]
Then the desired conclusion follows. For similar proofs,  one can refer to  \cite{B1,B2,Yao,YZ,Zh}.
\end{proof}

Next Lemma is from \cite{KL}.
\begin{Lemma}\label{t2}
Suppose that $y=x^{\theta},$ $0<\rho\leq t_{k}^{-1},$ $\frac{1}{2-t_{k}\rho}\leq \theta\leq 1$ and $A:=(x,x+y]$, where
\begin{equation}
t_{k}=\left\{
\begin{array}{ll}
2, \ \ &\textup{if} \ k=2,\\
k^{2}-k+1, \ \ &\textup{if} \ k\geq3.
\end{array}
\right.
\end{equation}
Then either
\begin{align}\label{k1}
\sum_{n\in A}e(\alpha n^{k}) \ll y^{1-\rho+\varepsilon},
\end{align}
or there exist integers $a$ and $q$ such that
\begin{align}\label{k2}
1\leq q\leq y^{k\rho},\ \ (a,q)=1, \ \ |q\alpha-a|\leq x^{1-k}y^{k\rho-1},
\end{align}
and
\begin{align}\label{k3}
\sum_{n\in A}e(\alpha n^{k}) \ll \frac{{\color{blue}w_{k}(q)}y}{1+yx^{k-1}|\alpha-a/q|}+y^{1-\rho+\varepsilon}.
\end{align}
\end{Lemma}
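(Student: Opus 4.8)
The plan is to establish the stated dichotomy along the classical circle-method route for Weyl sums over a short interval: first produce a candidate rational approximation to $\alpha$ by Dirichlet's theorem, then reduce $\sum_{n\in A} e(\alpha n^k)$ to a genuine degree-$k$ Weyl sum in the displacement variable, and finally run the Vinogradov mean value machinery to dispose of the minor-arc case while a Gauss-sum / oscillatory-integral computation handles the major-arc case. First I would set $n_0=\lfloor x\rfloor$ and write $n=n_0+m$ with $0<m\le y$, so that
\[
\sum_{n\in A} e(\alpha n^k)=\sum_{0<m\le y} e\big(\gamma_k m^k+\cdots+\gamma_1 m+\gamma_0\big),\qquad \gamma_j=\alpha\binom{k}{j}n_0^{\,k-j}.
\]
This exhibits the short sum as a Weyl sum in $m$ over $(0,y]$ whose leading coefficient is $\gamma_k=\alpha$ and whose lower coefficients satisfy $\gamma_j\asymp \alpha x^{k-j}$. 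Applying Dirichlet's theorem with modulus $Q=x^{k-1}y^{1-k\rho}$ produces integers $a,q$ with $(a,q)=1$, $1\le q\le Q$ and $|q\alpha-a|\le Q^{-1}=x^{1-k}y^{k\rho-1}$, so the second condition in (\ref{k2}) holds automatically and the dichotomy reduces to the size of $q$ relative to $y^{k\rho}$.

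For the minor-arc case $q>y^{k\rho}$, I would bound the displacement sum by combining a Weyl-differencing reduction with the sharp Vinogradov mean value bound $J_{s,k}(y)\ll y^{2s-k(k+1)/2+\varepsilon}$, where $J_{s,k}(y)$ counts solutions of the associated Vinogradov system with variables in $(0,y]$. The absence of a good approximation for $\alpha$ with denominator below $y^{k\rho}$ is exactly what converts the mean value input into the pointwise saving $\sum_{n\in A}e(\alpha n^k)\ll y^{1-\rho+\varepsilon}$, which is (\ref{k1}). The restriction $\rho\le t_k^{-1}$ and the precise value $t_k=k^2-k+1$ (resp. $t_k=2$ when $k=2$) are dictated by optimizing the number of differencing steps against the mean value exponent.

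For the major-arc case $q\le y^{k\rho}$, writing $\alpha=a/q+\beta$ with $|\beta|\le x^{1-k}y^{k\rho-1}$, I would split $m$ into residue classes modulo $q$ and replace the inner sum by an integral via Euler--Maclaurin summation. This produces the main term
\[
\frac{1}{q}\Big(\sum_{r=1}^{q} e(a r^k/q)\Big)\int_{x}^{x+y} e(\beta t^k)\,\dif t,
\]
whose Gauss-sum factor has size $\ll w_k(q)$ and whose oscillatory integral is $\ll y\,(1+yx^{k-1}|\beta|)^{-1}$, since $\tfrac{\dif}{\dif t}(\beta t^k)\asymp \beta x^{k-1}$ on $A$; together these give the first term of (\ref{k3}), while the Euler--Maclaurin error is absorbed into $y^{1-\rho+\varepsilon}$.

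The hard part will be the minor-arc estimate in the short interval. Unlike the full interval $[1,X]$, the coefficients $\gamma_j$ are large ($\asymp \alpha x^{k-j}$), so a rational approximation to the leading coefficient $\alpha$ does not automatically yield simultaneously usable approximations to all the $\gamma_j$. Controlling this coupling, so that the denominators arising in the differenced linear sums remain in the range where the Vinogradov mean value theorem is effective over an interval of length only $y=x^\theta$, is precisely what forces the constraint $\theta\ge (2-t_k\rho)^{-1}$; verifying that this constraint is sufficient for the argument to close is the technical crux.
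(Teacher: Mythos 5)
Your proposal is correct and takes essentially the same route as the paper: the paper's own proof simply defers the dichotomy and the minor-arc bound to \cite{KL,H,D}, and its only new ingredient is bounding the major-arc main term via $q^{-1}|S(q,a)|\ll w_{k}(q)$ combined with the first-derivative estimate $\int_{x}^{x+y}e(\beta t^{k})\,\dif t\ll y\left(1+yx^{k-1}|\beta|\right)^{-1}$ (Vaughan, Lemma 6.2), which is exactly what your major-arc case produces. The minor-arc step you flag as the technical crux is likewise not reproved in the paper but imported wholesale from those same references.
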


\begin{proof}
Similar as the proof in \cite{KL,H,D}, but estimate
\[
I(\beta)=\int_{x-y}^{x+y}e(\beta\gamma^{k})d\gamma
\]
in another way (as the way in \cite{K}), where $\beta=\alpha-a/q.$
By Lemma 6.2 in \cite{Va1}, we have
\[
I(\beta)\ll w_{k}(q)y(1+|\beta|yx^{k-1})^{-1}
\]
instead of the estimates in \cite{KL} and \cite{D}, i.e.
\[
I(\beta)\ll y(q+q|\beta|yx^{k-1})^{-1/k}.\]
Combining the analysis in \cite{KL,H}, we finally complete the proof.
\end{proof}

\begin{Lemma}[see \cite{MS}, see also in \cite{LL} for the case of $k=2$]\label{MS}
Let $y=x^{\theta}$ for some fixed $\theta\geq2/3+\varepsilon.$ Let $\alpha\in\mathbb{R}$ and let $k$ be a positive integer. Suppose that
\[
\left|\sum_{x<n\leq x+y}\Lambda(n)e(\alpha n^{k})  \right|\geq \frac{y}{(\log x)^{A}}
\]
for some $A\geq2.$ Then there exists a positive integer $q\leq (\log x)^{O_{k}(A)}$ such that
\[
|q\alpha-a|x^{k-1}y\leq (\log x)^{O_{k}(A)}.
\]
\end{Lemma}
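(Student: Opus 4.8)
The plan is to follow the method of Matom\"aki and Shao, reducing the prime-weighted exponential sum to linear (Type I) and bilinear (Type II) pieces and then inverting a short-interval Weyl inequality on each. First I would apply a combinatorial identity for the von Mangoldt function --- Vaughan's identity, or a Heath-Brown decomposition to gain flexibility in the ranges --- so as to write $\sum_{x<n\le x+y}\Lambda(n)e(\alpha n^k)$ as a sum of at most $(\log x)^{O_k(1)}$ terms of the two shapes
\[
S_{\mathrm I}=\sum_{m\sim M}c_m\!\!\sum_{x<mn\le x+y}\!\!e(\alpha (mn)^k),\qquad S_{\mathrm{II}}=\sum_{m\sim M}\sum_{n}a_m b_n\, e(\alpha(mn)^k),
\]
with coefficients of size $(\log x)^{O(1)}$ and ranges constrained by $MN\asymp x$, with $M\le x^{1/3}$ for the Type I sums and $M,N$ in intermediate ranges for the Type II sums. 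The largeness hypothesis $\bigl|\sum\Lambda(n)e(\alpha n^k)\bigr|\ge y(\log x)^{-A}$ then forces at least one such piece to exceed $y(\log x)^{-O_k(A)}$ in modulus, and it suffices to show that each piece is either genuinely small or else forces the asserted rational approximation to $\alpha$.

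For a Type I piece the inner variable runs over a full short sub-interval, so the inner sum $\sum_{x/m<n\le (x+y)/m}e(\alpha m^k n^k)$ is itself a short-interval Weyl sum with leading coefficient $\alpha m^k$. Here I would invoke Lemma \ref{t2} (equivalently, the underlying minor-arc estimate): unless $\alpha m^k$ is close to a rational with small denominator, one gets a power saving over the trivial bound $y/m$, and summing against the bounded coefficients $c_m$ beats $y(\log x)^{-O_k(A)}$. Thus Type I largeness pins down $\|\alpha m^k q'\|$ for the relevant $m$, and clearing the factor $m^k$ yields a rational approximation to $\alpha$ of the required quality. This case is the more routine one, since the long inner variable makes the Weyl step clean.

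The bilinear piece is the crux. I would Cauchy--Schwarz in the $m$ variable to eliminate $a_m$, obtaining
\[
|S_{\mathrm{II}}|^2\ll M(\log x)^{O(1)}\sum_{n_1,n_2}b_{n_1}\overline{b_{n_2}}\sum_{m\sim M}e\bigl(\alpha m^k(n_1^k-n_2^k)\bigr),
\]
and then apply Weyl differencing to the inner sum in $m$, reducing the whole expression to the number of solutions of a Vinogradov-type system in variables confined to short intervals. At this point I would feed in the Vinogradov mean value theorem in its now-optimal (decoupling / efficient congruencing) form, which for the relevant $\binom{k+1}{2}$-fold system gives the diagonal-dominant count. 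The exponent $\theta\ge 2/3+\varepsilon$ is precisely the threshold at which the number of available variables inside an interval of length $y=x^{\theta}$ suffices for this mean value to beat the trivial bound; below $x^{2/3}$ the decoupling estimate no longer yields a power saving. As before, the outcome is either a power saving contradicting largeness or a good rational approximation to $\alpha$.

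Finally I would collect the approximation produced by whichever piece was large and reconcile the data into a single positive integer $q\le (\log x)^{O_k(A)}$ together with an integer $a$ satisfying $|q\alpha-a|\,x^{k-1}y\le(\log x)^{O_k(A)}$, using a standard pigeonhole and continued-fraction argument to merge the denominators and match the natural major-arc width $x^{k-1}y$. The main obstacle is the Type II analysis: one must track the interaction between the decomposition ranges $M,N$, the interval length $y=x^{\theta}$, and the number of variables entering the Vinogradov mean value, ensuring at every step that the short-interval localization does not destroy cancellation. This balancing is exactly where the $2/3$ barrier originates, and carrying it out uniformly in $k$ with only polylogarithmic losses in $q$ is the delicate part of the argument.
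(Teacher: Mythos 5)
You should first note that the paper contains no proof of Lemma \ref{MS} at all: it is imported verbatim from \cite{MS} (with \cite{LL} for $k=2$), so the only meaningful comparison is with the Matom\"aki--Shao argument itself, whose skeleton --- a Vaughan/Heath-Brown decomposition into type I and type II sums, localized Weyl estimates for type I, Cauchy--Schwarz plus a short-interval Vinogradov mean value in the spirit of \cite{D} for type II --- your outline does reproduce. The genuine gap is a quality mismatch that your final ``merge'' step cannot repair. Every minor-arc tool you invoke (e.g.\ Lemma \ref{t2}) yields a dichotomy at \emph{power} precision: either a saving $y^{1-\rho+\varepsilon}$, or a rational approximation with denominator as large as $y^{k\rho}$, a small power of $x$; and in your type I inversion, ``clearing the factor $m^{k}$'' inflates the modulus to $q'm^{k}$, again a power of $x$. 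No pigeonhole or continued-fraction argument can compress power-sized denominators to the $q\le(\log x)^{O_{k}(A)}$ demanded by the lemma: indeed, after the combinatorial decomposition nothing in your sketch uses the von Mangoldt support any longer, whereas the polylogarithmic conclusion is specific to it. What is missing is the treatment of the intermediate regime in which $\alpha$ admits a power-type approximation $q\le y^{k\rho}$, $|q\alpha-a|\le x^{1-k}y^{k\rho-1}$, but not a polylogarithmic one: there one must evaluate the sum on these wide major arcs --- using bounds of the shape (\ref{k3}) with the factor $w_{k}(q)$, together with prime counting in short intervals and progressions (Huxley/Siegel--Walfisz input) --- and show the result is already $\le y(\log x)^{-A}$ unless both $q$ and $|q\alpha-a|x^{k-1}y$ are $(\log x)^{O_{k}(A)}$. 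In \cite{MS} the whole type I/II analysis is engineered at $(\log x)^{-O(A)}$ precision with exactly this dichotomy built in, rather than run at power precision and upgraded afterwards, as your plan implicitly assumes is possible.

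Two secondary points. First, your attribution of the $2/3$ barrier to ``decoupling no longer yielding a power saving below $x^{2/3}$'' is not accurate: localized Weyl estimates such as Lemma \ref{t2} save a power for $\theta$ all the way down to near $1/2$. The true bottleneck is coverage --- type I pieces need the inner interval $y/m$ to be long relative to $x/m$ (forcing $m$ up to roughly $y^{2}/x$), and the complementary type II ranges must fill in the rest of the decomposition, which meshes precisely when $y\ge x^{2/3+\varepsilon}$. Second, in your bilinear display the constraint $x<mn\le x+y$ couples the variables, so after Cauchy--Schwarz the inner $m$-sum runs over an interval of length about $y/N$ depending on $(n_{1},n_{2})$, with a diagonal contribution that must be tracked separately; as written you sum $m$ over a full dyadic block. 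These two items are repairable with care, but the power-to-polylog gap described above is structural and would have to be filled by a genuinely new (major-arc, prime-specific) component of the argument.
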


\begin{Lemma}[see \cite{KL}]\label{K}
For $y\geq x^{1/2+\varepsilon},$  $s\geq k^{2}+k$ and $k\geq2,$ we have
\[
\int_{[0,1]}\left|\sum_{x<n\leq x+y}(\log p)e(\alpha p)\right|^{s}\mathrm{d}\alpha\ll y^{s-1}x^{1-k+\varepsilon}.
\]
\end{Lemma}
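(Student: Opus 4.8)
The plan is to prove the stated mean value by reducing it, through a binomial substitution, to the Vinogradov mean value theorem at its critical exponent. First I would dispose of two cheap reductions. Since $\log p\ll\log x\ll x^{\varepsilon}$ and the primes are a subset of the integers, the weighted prime moment is at most $(\log x)^{s}$ times the corresponding integer moment, so it suffices to treat $g(\alpha)=\sum_{x<n\le x+y}e(\alpha n^{k})$. Moreover $|g(\alpha)|\le y+O(1)$, so writing $s=k(k+1)+r$ and pulling out $(\sup_{\alpha}|g|)^{r}\ll y^{r}$ reduces everything to the critical even exponent. Thus it is enough to show
\[
\int_{0}^{1}|g(\alpha)|^{2t}\,d\alpha\ll y^{2t-1}x^{1-k+\varepsilon},\qquad t=\tfrac{k(k+1)}{2}.
\]

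Next I would read this integral as the number of solutions of $n_{1}^{k}+\cdots+n_{t}^{k}=m_{1}^{k}+\cdots+m_{t}^{k}$ with all variables in $(x,x+y]$, and substitute $n_{i}=x+a_{i}$, $m_{i}=x+b_{i}$ with $a_{i},b_{i}\in(0,y]$. Expanding $(x+a)^{k}=\sum_{l=0}^{k}\binom{k}{l}x^{k-l}a^{l}$ and cancelling the equal leading terms converts the single equation into
\[
\sum_{l=1}^{k}\binom{k}{l}x^{k-l}\,T_{l}=0,\qquad T_{l}:=\sum_{i=1}^{t}a_{i}^{l}-\sum_{i=1}^{t}b_{i}^{l}\in\mathbb{Z}.
\]
Hence the vector $\mathbf{T}=(T_{1},\dots,T_{k})$ is forced onto the codimension-one lattice $\Lambda=\{\mathbf{u}\in\mathbb{Z}^{k}:\sum_{l}\binom{k}{l}x^{k-l}u_{l}=0\}$, and the integral equals $\sum_{\mathbf{u}\in\Lambda}r(\mathbf{u})$, where $r(\mathbf{u})$ counts $(\mathbf{a},\mathbf{b})$ with $\mathbf{T}=\mathbf{u}$. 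Writing $r(\mathbf{u})=\sum_{\mathbf{w}}\nu(\mathbf{w})\nu(\mathbf{w}-\mathbf{u})$ for the representation function $\nu$ of the power-sum vector $(\sum_{i}a_{i}^{l})_{l}$, Cauchy--Schwarz gives $r(\mathbf{u})\le r(\mathbf{0})$, and $r(\mathbf{0})$ is exactly the Vinogradov mean value $J_{t,k}(y)$. At the critical exponent $t=k(k+1)/2$ the main conjecture in Vinogradov's mean value theorem (as in \cite{D}) yields $r(\mathbf{0})\ll y^{k(k+1)/2+\varepsilon}$.

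It then remains only to count lattice points, since
\[
\int_{0}^{1}|g|^{2t}\,d\alpha=\sum_{\mathbf{u}\in\Lambda}r(\mathbf{u})\le r(\mathbf{0})\cdot\#\{\mathbf{u}\in\Lambda:|u_{l}|\le t\,y^{l}\ (1\le l\le k)\}.
\]
The volume heuristic predicts $\#\{\cdots\}\asymp y^{k(k+1)/2-1}x^{1-k}$, and multiplying this by the bound for $r(\mathbf{0})$ produces precisely $y^{k(k+1)-1}x^{1-k+\varepsilon}=y^{2t-1}x^{1-k+\varepsilon}$, as required. So once the lattice count is established the theorem follows, and the role of the Vinogradov input is only to supply the clean power $y^{k(k+1)/2+\varepsilon}$ at the diagonal $\mathbf u=\mathbf 0$.

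I expect the genuine obstacle to be exactly this lattice-point count, carried out uniformly down to $y\ge x^{1/2+\varepsilon}$. Solving the defining relation for the variable $u_{1}$ with the dominant coefficient $kx^{k-1}$ shows that, as soon as $y<x^{1-1/k}$, the relevant convex body is thinner than $1$ in that direction, so one cannot simply replace the point count by the volume; one must instead show that, as $(u_{2},\dots,u_{k-1})$ range over the box, the quantity $\frac{1}{kx^{k-1}}\sum_{l\ge2}\binom{k}{l}x^{k-l}u_{l}$ equidistributes modulo one, for otherwise the slab $|\sum_{l}\binom{k}{l}x^{k-l}u_{l}|\le ty^{k}$ would capture far more than its expected share of lattice points. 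The hypothesis $y\ge x^{1/2+\varepsilon}$ is precisely what makes this work: it forces the $l=2$ contribution, of size $\asymp u_{2}/x$ with $|u_{2}|\le ty^{2}$, to sweep out $\gg x^{\varepsilon}$ residues, which is enough to pin the count to the predicted volume. Controlling these boundary/equidistribution contributions, rather than invoking the Vinogradov bound, is where the real work lies.
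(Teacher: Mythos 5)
The paper itself gives no proof of this lemma (it is quoted from \cite{KL}), so the comparison is with the argument in the cited literature, which indeed runs along the lines you propose: positivity to pass from primes to integers, the shift $n=x+a$ turning the single equation into a linear constraint on the power-sum vector, and Vinogradov's mean value theorem at the critical exponent. Your reductions are essentially sound (one slip: the claim that a weighted prime moment is at most $(\log x)^{s}$ times the integer moment because ``primes are a subset of the integers'' is only valid for \emph{even integer} exponents, where the moment counts solutions with nonnegative weights; it is false for general real $s$, so you must first pull out $\bigl(\sup_{\alpha}|f|\bigr)^{s-k(k+1)}$ and only then compare even moments). Likewise $r(\mathbf{u})\le r(\mathbf{0})=J_{t,k}(y)\ll y^{t+\varepsilon}$ is correct, though the critical-exponent bound is due to Bourgain--Demeter--Guth and Wooley, not \cite{D}. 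The genuine gap is the step you explicitly defer: the count
\[
\#\bigl\{\mathbf{u}\in\Lambda:\ |u_{l}|\le ty^{l}\ (1\le l\le k)\bigr\}\ \ll\ y^{t-1}x^{1-k+\varepsilon},\qquad t=\tfrac{k(k+1)}{2},
\]
is not a boundary technicality to be waved at --- it \emph{is} the content of the lemma (in essence the localized counting theorem of Daemen \cite{D} on which \cite{KL} builds), since multiplying it by $r(\mathbf{0})$ is all that remains of your argument.

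Worse, the mechanism you sketch for it --- equidistribution of the single term $\binom{k}{2}u_{2}/(kx)$ modulo one, claimed to suffice once $y\ge x^{1/2+\varepsilon}$ --- is quantitatively insufficient for $k\ge5$. In the range $y<x^{1-1/k}$ one has $\delta:=ty^{k}/(kx^{k-1})<1$, so for fixed $(u_{3},\dots,u_{k-1})$ and each $u_{2}$ there is at most one admissible $u_{1}$, and counting the admissible residues of $\binom{k}{2}u_{2}\bmod kx$ can only be done to within $O(1)$; each tuple therefore carries an unavoidable rounding error $O(y^{2}/x)$, and summing over the box in $(u_{3},\dots,u_{k-1})$ leaves an error
\[
O\Bigl(y^{\frac{k(k-1)}{2}-1}x^{-1}\Bigr),
\]
which exceeds the volume term $y^{\frac{k(k+1)}{2}-1}x^{1-k}$ by the factor $x^{k-2}y^{-k}$. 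That factor is large precisely when $y\le x^{1-2/k-\varepsilon}$, and since $1-2/k>1/2$ for $k\ge5$, your one-variable argument fails throughout $x^{1/2+\varepsilon}\le y<x^{1-2/k}$, i.e.\ in most of the range the lemma asserts. A correct proof must exploit equidistribution \emph{jointly and inductively} in $u_{2},u_{3},\dots$ (equivalently, perform a genuine major/minor-arc analysis of the dual integral $\int_{0}^{1}\prod_{l=1}^{k}\min\bigl(y^{l},\|\beta\binom{k}{l}x^{k-l}\|^{-1}\bigr)\,\mathrm{d}\beta$), which is exactly the work carried out in \cite{D} and \cite{KL}. As it stands, your proposal reduces the lemma to its hard core and stops there.
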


Next we will prove another lemma to achieve our goals. The proof involves some ideas in \cite{KZ,Yao,Zh}.
\begin{Lemma}\label{l1}
Suppose that $y=x^{\theta},$ $0<\rho\leq t_{k}^{-1},$ $\frac{1}{2-t_{k}\rho}\leq \theta\leq 1.$  For $\mathcal{I},$ a subinterval of $(x,x+y],$  let $h(\alpha)=h_{\mathcal{I}}(\alpha)=\sum_{n\in\mathcal{I}}(\log n) e(\alpha n^{k}).$ For $1\leq  q \leq y^{k\rho},$ $(a,q)=1,$ $\mathcal{M}_{q,a}=\{\alpha:|q\alpha-a|\leq x^{1-k}y^{k\rho-1}\},$ let $\mathcal{M}$ be the union of the intervals of $\mathcal{M}(q,a).$ Then for any measurable set $\mathfrak{W}\subseteq[0,1],$ we have
\[
\int_{\mathfrak{W}}|h(\alpha)|^{s}d\alpha\ll y^{2}(\log x)^{2}\mathcal{J}_{0}^{1/2}\left(\int_{\mathfrak{W}}
|h(\alpha)|^{2s-6}\right)^{1/2}
+y^{2-\rho+\varepsilon}\mathcal{J}(\mathfrak{W})\]
where
\[
\mathcal{J}_{0}=\sup_{\beta\in[0,1]}\int_{\mathcal{M}}\frac{w_{k}^{2}(q)|h(\alpha\pm\beta)|^{2} }{\left(1+x^{k-1}y|\alpha-a/q |\right)^{2}}d\alpha
\]
and
\[
\mathcal{J}(\mathfrak{W})=\int_{\mathfrak{W}}|h(\alpha)|^{{\bf s-2}}d\alpha.
\]
\end{Lemma}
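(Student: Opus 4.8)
The plan is to reduce everything to the pointwise dichotomy of Lemma~\ref{t2} and then split the integrand as $|h(\alpha)|^{s}=|h(\alpha)|^{2}\cdot|h(\alpha)|^{s-2}$, treating the two factors by a Weyl-type estimate and Cauchy--Schwarz respectively. Throughout I use that the arcs $\mathcal{M}_{q,a}$ are pairwise disjoint for the given ranges of $q,\theta,\rho$, so that the kernel below is an honest function on $\mathcal{M}$.

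First I would produce a pointwise bound for $h_{\mathcal{I}}$. Since $h_{\mathcal{I}}$ carries the weight $\log n$ and runs over a subinterval, I would apply partial summation to write
\[
|h_{\mathcal{I}}(\alpha)|\ll(\log x)\sup_{\mathcal{I}'\subseteq\mathcal{I}}\Bigl|\sum_{n\in\mathcal{I}'}e(\alpha n^{k})\Bigr|,
\]
and then apply Lemma~\ref{t2} to each inner sum. Because the model main term $w_{k}(q)L\bigl(1+Lx^{k-1}|\alpha-a/q|\bigr)^{-1}$ is increasing in the length $L\leq y$, the dichotomy transfers uniformly over the subintervals $\mathcal{I}'$, giving
\[
|h_{\mathcal{I}}(\alpha)|\ll(\log x)\Bigl(\frac{w_{k}(q)y}{1+yx^{k-1}|\alpha-a/q|}+y^{1-\rho+\varepsilon}\Bigr)
\]
for $\alpha\in\mathcal{M}_{q,a}$, and $|h_{\mathcal{I}}(\alpha)|\ll y^{1-\rho+\varepsilon}$ for $\alpha\notin\mathcal{M}$.

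Next I would split $\int_{\mathfrak{W}}|h|^{s}=\int_{\mathfrak{W}\cap\mathcal{M}}+\int_{\mathfrak{W}\setminus\mathcal{M}}$. For the minor part, and for the contribution of the secondary term $y^{1-\rho+\varepsilon}$ on the major arcs, I use $|h(\alpha)|^{2}\ll y^{2-2\rho+2\varepsilon}$ (absorbing $(\log x)^{2}$ into $y^{\varepsilon}$); pulling this factor out of $|h|^{s}=|h|^{2}|h|^{s-2}$ leaves a contribution
\[
\ll y^{2-2\rho+2\varepsilon}\int_{\mathfrak{W}}|h|^{s-2}\,d\alpha\ll y^{2-\rho+\varepsilon}\mathcal{J}(\mathfrak{W}),
\]
the last step using $\rho>0$. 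For the main major-arc term I set $K(\alpha)=w_{k}(q)\bigl(1+yx^{k-1}|\alpha-a/q|\bigr)^{-1}$ on $\mathcal{M}_{q,a}$, so that $|h|^{2}\ll(\log x)^{2}y^{2}K^{2}$ there, and this piece is at most $y^{2}(\log x)^{2}\int_{\mathfrak{W}\cap\mathcal{M}}K^{2}|h|^{s-2}\,d\alpha$.

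Finally I would apply Cauchy--Schwarz with $f=K|h|$ and $g=K|h|^{s-3}$ (legitimate since $s\geq\frac{k(k+1)}{2}+1\geq4$), giving
\[
\int_{\mathfrak{W}\cap\mathcal{M}}K^{2}|h|^{s-2}\leq\Bigl(\int_{\mathcal{M}}K^{2}|h|^{2}\Bigr)^{1/2}\Bigl(\int_{\mathfrak{W}\cap\mathcal{M}}K^{2}|h|^{2s-6}\Bigr)^{1/2}.
\]
The first factor is $\leq\mathcal{J}_{0}^{1/2}$ on taking $\beta=0$ in the supremum defining $\mathcal{J}_{0}$, while for the second I use $w_{k}(q)\ll1$, hence $K^{2}\ll1$, to drop the kernel and bound it by $\bigl(\int_{\mathfrak{W}}|h|^{2s-6}\bigr)^{1/2}$. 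Adding the two contributions yields the stated inequality. The step I expect to be the main obstacle is the very first one: justifying the pointwise bound for the weighted, subinterval sum $h_{\mathcal{I}}$ uniformly in $\mathcal{I}$, i.e. verifying that partial summation together with the length-monotonicity of the major-arc model genuinely transfers Lemma~\ref{t2} to $h_{\mathcal{I}}$ at the cost of only the factor $\log x$ that survives as $(\log x)^{2}$ in the main term.
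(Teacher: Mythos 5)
Your overall architecture (splitting $|h|^{s}=|h|^{2}|h|^{s-2}$, the kernel $K$ on the arcs, Cauchy--Schwarz with $\beta=0$ recovering $\mathcal{J}_{0}$, and absorbing the off-arc contribution into $y^{2-\rho+\varepsilon}\mathcal{J}(\mathfrak{W})$) is internally consistent, but it rests entirely on the pointwise bound you assert at the outset, and that is a genuine gap, not the formality you hope it is. Lemma \ref{t2} is a dichotomy for the \emph{complete} sum over $A=(x,x+y]$, with every parameter calibrated to the full length $y$: the structural hypothesis $\frac{1}{2-t_{k}\rho}\leq\theta$, the minor bound $y^{1-\rho+\varepsilon}$ in (\ref{k1}), and, in the second branch, a rational approximation satisfying (\ref{k2}), i.e. $q\leq y^{k\rho}$ and $|q\alpha-a|\leq x^{1-k}y^{k\rho-1}$. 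After partial summation you must treat $\sum_{n\in\mathcal{I}'}e(\alpha n^{k})$ for arbitrary subintervals $\mathcal{I}'$ of length $L\leq y$, and the monotonicity you invoke compares only the two major-arc \emph{main terms}; it says nothing about the dichotomy itself. If you apply Lemma \ref{t2} at length $L$ with some parameter $\rho'$, you must (i) re-verify the length hypothesis $L\geq x^{1/(2-t_{k}\rho')}$, which fails for $\rho'=\rho$ once $L$ is near its trivial threshold $y^{1-\rho}$ (at $\theta=\frac{1}{2-t_{k}\rho}$ it would require $1-\rho\geq1$); (ii) observe that the second branch only places $\alpha$ in an $L$-calibrated arc $q\leq L^{k\rho'}$, $|q\alpha-a|\leq x^{1-k}L^{k\rho'-1}$, which is wider than, and not contained in, your $y$-calibrated $\mathcal{M}$, so ``$\alpha\notin\mathcal{M}$'' does not put you in the minor branch; and (iii) force the minor bound $L^{1-\rho'+\varepsilon}$ below $y^{1-\rho+\varepsilon}$. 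Constraints (i) and (iii) pull $\rho'$ in opposite directions, and one must choose $\rho'$ as a function of $L$ inside a window whose non-emptiness is a real computation using $t_{k}\geq2$ (for $k=2$ the window is essentially a single point), plus a separate hand estimate of (\ref{k3}) for $\alpha\notin\mathcal{M}$ lying in a wider $L$-arc. None of this is in your proposal, so the input on which your whole proof rests is unsupported.

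The paper's proof is engineered precisely so that this issue never arises, and the contrast is instructive. Instead of estimating $h$ pointwise, it writes $\int_{\mathfrak{W}}|h|^{s}d\alpha=\sum_{n\in\mathcal{I}}(\log n)\int_{\mathfrak{W}}e(\alpha n^{k})|h(\alpha)|^{s-2}h(-\alpha)\,d\alpha$, extends the $n$-sum to all of $(x,x+y]$ by positivity, applies Cauchy--Schwarz in $n$ (the source of the factor $y(\log x)^{2}$), and expands the square; the only object then requiring a pointwise estimate is the complete unweighted Weyl sum $f(\alpha-\beta)=\sum_{x<n\leq x+y}e((\alpha-\beta)n^{k})$, to which Lemma \ref{t2} applies verbatim. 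This has two consequences your route cannot reproduce. First, the argument uses nothing about $h$ beyond nonnegative coefficients bounded by $\log x$ and supported in $(x,x+y]$; in particular it covers the case actually used in Proposition \ref{W1}, where $h=f$ is supported on \emph{primes}. Your pointwise approach is unrepairable there: no Kumchev--Liu type dichotomy exists for $\sum_{p}(\log p)e(\alpha p^{k})$ at $\theta=2/3$ (only the log-saving Lemma \ref{MS}), and avoiding exactly that input is the stated purpose of the paper. Second, the bilinear structure is what creates the shifted argument $|h(\alpha\pm\beta)|$ and the supremum over $\beta$ in $\mathcal{J}_{0}$ (the kernel from Lemma \ref{t2} ends up centered at $\beta+a/q$); in your proof that supremum is vacuous, with only $\beta=0$ used---a symptom that you are proving the statement by a route on which it would never be applied, since in the paper's application $\mathcal{J}_{0}$ must then be bounded by Lemma \ref{Y} uniformly in the shift $\gamma$. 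In short, your target decomposition is correct, but the path to it has to go through the bilinear expansion rather than through a pointwise bound on $h$.
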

%\begin{Remark}
%We remark that in this lemma,
%\[
%y^{2-\rho+\varepsilon}\mathcal{J}(\mathfrak{W})=y^{2-\rho+\varepsilon}\int_{\mathfrak{W}}|h(\alpha)|^{{\bf s-2}}d\alpha.
%\]
%cannot be instead by
%\[
%y^{1-\rho/2+\varepsilon}\mathcal{J}(\mathfrak{W})=y^{1-\rho/2+\varepsilon}\int_{\mathfrak{W}}|h(\alpha)|^{{\bf s-1}}d\alpha.
%\]
%\end{Remark}
\begin{proof}
%Write
%\[
%\mathcal{M}_{\beta}=\mathcal{M}_{\beta}(q,a)-\beta=\{\alpha:|q\alpha-q\beta-a|\leq
%x^{1-k}y^{k\rho-1}\}
%\]
%and
%\[
%\mathcal{M}_{\beta}=\bigcup_{q\leq y^{k\rho}}\bigcup_{1\leq a\leq q,\ (a,q)=1}\mathcal{M}_{\beta}(q,a).
%\]
Write
\[
\mathcal{M}_{\alpha}=\mathcal{M}_{\alpha}(q,a)-\beta=\{\beta:|q\alpha-q\beta-a|\leq
x^{1-k}y^{k\rho-1}\}
\]
and
\[
\mathcal{M}_{\alpha}=\bigcup_{q\leq y^{k\rho}}\bigcup_{1\leq a\leq q,\ (a,q)=1}\mathcal{M}_{\alpha}(q,a).
\]
Following the argument in \cite{Yao},
one can get
\begin{align*}
\begin{split}
\left|\int_{\mathfrak{W}}|h(\alpha)|^{s}d\alpha\right|^{2}&=\left|\sum_{n\in\mathcal{I}}
(\log n)\int_{\mathfrak{W}}e(\alpha n^{k})|h(\alpha)|^{s-2}h(-\alpha)d\alpha\right|^{2}\\
&\ll \left|\log x \sum_{x< n\leq x+y}
\left|\int_{\mathfrak{W}}e(\alpha n^{k})|h(\alpha)|^{s-2}h(-\alpha)d\alpha\right| \right|^{2}.
\end{split}
\end{align*}
By Cauchy's inequality, we have
\begin{align}\label{2.1}
\begin{split}
\left|\int_{\mathfrak{W}}|h(\alpha)|^{s}d\alpha\right|^{2}
\ll y(\log x)^{2} \sum_{x< n\leq x+y}
\left|\int_{\mathfrak{W}}e(\alpha n^{k})|h(\alpha)|^{s-2}h(-\alpha)d\alpha\right|^{2}.
\end{split}
\end{align}
Expand the square, we can get
\begin{align*}
\sum_{x< n\leq x+y}
&\left|\int_{\mathfrak{W}}e(\alpha n^{k})|h(\alpha)|^{s-2}h(-\alpha)d\alpha\right|^{2}\\
&=\int_{\mathfrak{W}}\int_{\mathfrak{W}}\sum_{x< n\leq x+y}e((\alpha-\beta) n^{k})|h(\alpha)|^{s-2}h(-\alpha)|h(\beta)|^{s-2}h(\beta)d\alpha d\beta
.\end{align*}
Write
\[
f(\gamma)=\sum_{x< n\leq x+y}e(\gamma n^{k}).
\]
Note that
\[
|h(-\alpha)h(\beta)|\leq |h(\alpha) |^{2}+| h(\beta)|^{2}.
\]
Then
\begin{align*}
\sum_{x< n\leq x+y}
&\left|\int_{\mathfrak{W}}e(\alpha n^{k})|h(\alpha)|^{s-2}h(-\alpha)d\alpha\right|^{2}\\&\ll
\int_{\mathfrak{W}}\int_{\mathfrak{W}}\left|f(\alpha-\beta)\right|\left(\left|h(\alpha)\right|^{s-2}| h(\beta)|^{s}  \right)  d\alpha d\beta.
\end{align*}
Thus
\begin{align*}
\int_{\mathfrak{W}}\int_{\mathfrak{W}}\left|f(\alpha-\beta)\right|\left|h(\alpha)\right| ^{s-2}| h(\beta)|^{s} d\alpha d\beta
&\ll
\int_{\mathfrak{W}}\int_{\mathfrak{W}\cap\mathcal{M}_{\beta}}|h(\beta)|^{s}|h(\alpha)|^{s-2}
\frac{w_{k}(q)y}{1+x^{k-1}y|\alpha-\beta-a/q|}d\alpha d\beta\\
&+ y^{1-\rho+\varepsilon}\int_{\mathfrak{W}}|h(\alpha)|^{s-2}
d\alpha\int_{\mathfrak{W}}|h(\alpha)|^{s}d\alpha.
\end{align*}
%Similarly, we have
%\begin{align*}
%\int_{\mathfrak{W}}\int_{\mathfrak{W}}\left|f(\alpha-\beta)\right|\left|h(\beta)\right| ^{s-2}| h(\alpha)|^{s} d\alpha d\beta&\ll
%\int_{\mathfrak{W}}\int_{\mathfrak{W}\cap\mathcal{M}_{\alpha}}|h(\alpha)|^{s}|h(\beta)|^{s-2}
%\frac{w_{k}(q)y}{1+xy|\alpha-\beta-a/q|}d\beta d\alpha \\
%&+ y^{1-\rho+\varepsilon}\int_{\mathfrak{W}}|h(\alpha)|^{s-2}
%d\alpha\int_{\mathfrak{W}}|h(\alpha)|^{s}d\alpha.
%\end{align*}
 Write
% \[
% T_{\alpha}=\int_{\mathfrak{W}\cap\mathcal{M}_{\beta}}|h(\alpha)|^{s-2}
%\frac{w_{k}(q)y}{1+x^{2}y|\alpha-\beta-a/q|}d\alpha
% \]
%and
\[
T_{\beta}=\int_{\mathfrak{W}\cap\mathcal{M}_{\alpha}}|h(\beta)|^{s-2}
\frac{w_{k}(q)y}{1+x^{k-1}y|\alpha-\beta-a/q|}d\beta.
\]
By Cauchy's inequality, we have
%\begin{align*}
% T_{\alpha}&\ll
% y\left(\int_{\mathfrak{W}} |h(\alpha)|^{2s-6}d\alpha\right)^{1/2}\left(\int_{\mathcal{M}}\frac{w_{k}^{2}(q)|h(\alpha+\beta)|^{2} }{\left(1+x^{2}y|\alpha-a/q|\right)^{2}}d\alpha\right)^{1/2}\\
% &\ll
% y\left(\int_{\mathfrak{W}} |h(\alpha)|^{2s-6}d\alpha\right)^{1/2}\mathcal{J}_{0}^{1/2}
%\end{align*}
%and
\begin{align*}
 T_{\beta}&\ll
 y\left(\int_{\mathfrak{W}} |h(\beta)|^{2s-6}d\beta\right)^{1/2}\left(\int_{\mathcal{M}}\frac{w_{k}^{2}(q)|h(\alpha-\beta)|^{2} }{\left(1+x^{k-1}y|\beta-a/q|\right)^{2}}d\beta\right)^{1/2}\\
 &\ll
  y\left(\int_{\mathfrak{W}} |h(\beta)|^{2s-6}d\beta\right)^{1/2}\mathcal{J}_{0}^{1/2}.
\end{align*}
Hence
\begin{align}\label{2.2}
\begin{split}
\sum_{x< n\leq x+y}
\left|\int_{\mathfrak{W}}e(\alpha n^{k})|h(\alpha)|^{s-2}h(-\alpha)d\alpha\right|^{2}&\ll
y\int_{\mathfrak{W}}|h(\alpha)|^{s}d\alpha\left(\int_{\mathfrak{W}} |h(\alpha)|^{2s-6}d\alpha\right)^{1/2}\mathcal{J}_{0}^{1/2}\\
&+ y^{1-\rho+\varepsilon}\int_{\mathfrak{W}}|h(\alpha)|^{s-2}
d\alpha\int_{\mathfrak{W}}|h(\alpha)|^{s}d\alpha.
\end{split}
\end{align}
Now the desired result can be deduced from (\ref{2.1}) and (\ref{2.2}).

\end{proof}
\section{Proof of  Theorem \ref{t3}}
Let $x=(N/s)^{1/k}$ and $y=x^{\theta},$ where $N$ is a sufficiently large natural number. Let $n$ satisfy $ |n-N| \leq yx^{k-1}.$
Let
\[
f (\alpha)=\sum_{x -y <n\leq x +y }(\log p)e(\alpha p^{k}).
\]
denote the summation for primes $p.$
Then
\[
\varrho  (n)=\int_{[0,1]}f (\alpha)^{s}e(-n\alpha)d\alpha.
\]
Let
\[
P=(\log x)^{A}, \ \ \ \ Q= xy^{k-1}P^{-1}.
\]
Write
\[
I(q,a)=\{\alpha\in[0,1):|q\alpha-a|\leq Q^{-1}\}.
\]
Then define the major arcs $\mathfrak{M}$ and minor arcs $\mathfrak{m}$ as
\[
\mathfrak{M}=\mathfrak{M}(P,Q)=\bigcup_{q\leq P}\bigcup_{\mbox{\tiny$\begin{array}{c}
1\leq a\leq q\\
(a,q)=1\\
\end{array}$}}I(q,a),
\]
\[
\mathfrak{m}=\mathfrak{m}(P,Q)=[0,1)\setminus\mathfrak{M}.
\]
Suppose that $\mathfrak{B}$ is a measurable subset of $[0,1)$. Let
\[
\varrho (n,\mathfrak{B})=\int_{\mathfrak{B}}f(\alpha)^{s}e(-n\alpha)d\alpha.
\]
Then we can write
\[
\varrho  (n)=\varrho  (n,\mathfrak{M})+\varrho (n,\mathfrak{m}).
\]
The analysis of $\varrho (n,\mathfrak{M})$: We follow the analysis in \cite{MS} and \cite{WW}.
The width of our major arc is chosen so that if $\alpha\in \mathfrak{M}(q,a),$ then $f(\alpha)$ can be estimated by counting primes in short intervals in residue classes modulo $q.$ Since $\theta>7/12,$ we can use Huxley's result on primes in short intervals to get
\[
f(a/q+\beta)=\phi(q)^{-1}S(q,a)v(\beta)+
O(y/(\log)^{10})
\]
for
\[
|\beta|\leq P/(yx^{k-1}),
\]
where
\[
S(q,a)=\sum_{1\leq b\leq q \atop (b,q)=1}e\left(\frac{ab^{k}}{q}\right)
\]
and
\[
v(\beta)=k^{-1}\sum_{(x-y)^{k}\leq m \leq(x+y)^{k}}m^{-1+1/k}e(\beta m).
\]
Thus the standard theory of the major arc contributions in the Waring-Goldbach problem can be applied to yield the estimate
\begin{align}\label{asy}
\varrho  (n,\mathfrak{M})=\mathfrak{S}(n)\mathfrak{J}(n)
+O\left(\frac{y^{s-1}}{x^{k-1}(\log x)^{10}}\right),
\end{align}
where $\mathfrak{S}(n)$ is the singular series
\[
\mathfrak{S}(n)=\sum_{q=1}^{\infty}\phi(q)^{-s}
\sum_{1\leq a \leq q \atop (a,q)=1}S(q,a)^{s}e(-an/q),
\]
and $\mathfrak{J}(n)$ is the singular integral
\[
\mathfrak{J}(n)=\int_{[0,1]}v(\beta)^{s}e(-\beta n)d\beta.
\]
On the  other hand,
\[
\mathfrak{S}(n)\asymp1,\ \ \ \mathfrak{J}(n)\asymp y^{s-1}x^{1-k}.
\]
Then we have
\begin{equation}\label{z}
\varrho (n,\mathfrak{M}) \gg y^{s-1}x^{1-k}.
\end{equation}

$E(N,s,x^{k-1}y)$ does not exceed the number of integers $n$ for which
\[
\varrho(n,\mathfrak{m})\ll y^{s-1}x^{1-k}(\log x)^{-2}.
\]
Thus it follows from Bessel's inequality that
\[
E(N,s,x^{k-1}y)\ll (y^{s-1}x^{1-k})^{-2}(\log x)^{4}\int_{\mathfrak{m}}|f(\alpha)|^{2s}d \alpha,
\]
and theorem follows from the following proposition.
Then I would state and prove the proposition, and that will close the paper.
\begin{Proposition}\label{W1}
Let $A_{0}\geq 1$ be fixed and
\[
K(t)=\int_{\mathfrak{m}}|f(\alpha)|^{t}d\alpha.
\]
For $t\geq k^{2}+k+2$ and $y\geq x^{2/3 +\varepsilon},$ we have
\[
K(t)\ll y^{t-1}x^{1-k}(\log x)^{-A_{0}},
\]
provided that $A$ is sufficiently large in terms of  $A_{0}.$
\end{Proposition}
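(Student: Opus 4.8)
The plan is to prove the bound in the critical case $t=k^{2}+k+2=2s_{0}$, where $s_{0}=\frac{k(k+1)}{2}+1$ (note $s_{0}\ge4$ for all $k\ge2$), and to deduce larger $t$ by pointwise peeling. The engine is Lemma \ref{l1} applied with $h=f$: inspecting its proof, only $|\log p|\le\log x$ and the length $\ll y$ of the range are used, and the integer Weyl sum $f(\alpha-\beta)$ to which Lemma \ref{t2} is applied arises from the Cauchy--Schwarz step irrespective of whether $h$ runs over primes, so the lemma is available for the prime sum. I would first record a pointwise minor-arc bound: by Lemma \ref{MS}, $|f(\alpha)|\ge y(\log x)^{-A'}$ forces $\alpha$ within $(\log x)^{O_{k}(A')}x^{1-k}y^{-1}$ of some $a/q$ with $q\le(\log x)^{O_{k}(A')}$, and choosing the major-arc parameter $A$ large in terms of $A'$ places every such $\alpha$ in $\mathfrak{M}$ (the width condition $(\log x)^{O_{k}(A')}x^{1-k}y^{-1}\le Q^{-1}$ holds for $k=2$ by taking $A$ large and for $k\ge3$ because $(x/y)^{k-2}$ beats any power of $\log x$, using $y=x^{\theta}$, $\theta<1$). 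Hence $\sup_{\mathfrak{m}}|f|\ll y(\log x)^{-A'}$ with $A'$ at our disposal.

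Fixing a small constant $\rho>0$ with $\tfrac{1}{2-t_{k}\rho}\le2/3$ (compatible with $\theta\ge2/3+\varepsilon$) and applying Lemma \ref{l1} with $s=2s_{0}$, $\mathfrak{W}=\mathfrak{m}$, gives
\[
\int_{\mathfrak{m}}|f|^{2s_{0}}\ll y^{2}(\log x)^{2}\mathcal{J}_{0}^{1/2}\Bigl(\int_{\mathfrak{m}}|f|^{4s_{0}-6}\Bigr)^{1/2}+y^{2-\rho+\varepsilon}\int_{\mathfrak{m}}|f|^{2s_{0}-2}.
\]
For the second term I would use $2s_{0}-2=k^{2}+k$, so Lemma \ref{K} yields $\int_{\mathfrak{m}}|f|^{2s_{0}-2}\le\int_{[0,1]}|f|^{k^{2}+k}\ll y^{k^{2}+k-1}x^{1-k+\varepsilon}$, and the prefactor $y^{2-\rho+\varepsilon}$ turns this into $y^{2s_{0}-1}x^{1-k}x^{-\rho\theta+O(\varepsilon)}$, which for small $\varepsilon$ is $\ll y^{2s_{0}-1}x^{1-k}(\log x)^{-A_{0}}$ since $\rho\theta$ is a fixed positive constant. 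The point is that the power saving $y^{-\rho}$ delivered by the Kumchev--Liu estimate (Lemma \ref{t2}) inside Lemma \ref{l1} absorbs the $x^{\varepsilon}$ of the Vinogradov mean value, so Lemma \ref{K} is used here with no appeal to Lemma \ref{MS}.

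For the first term I would estimate $\mathcal{J}_{0}$ by Lemma \ref{Y} with $D=x^{k-1}y$, $M=y^{k\rho}$ (admissible as $x^{1/2}\le y$ and $y^{k\rho}\le y$), obtaining the $x^{\varepsilon}$-free bound $\mathcal{J}_{0}\ll yx^{1-k}(\log x)^{c_{0}}$ (the squared denominator only helps), and control the high moment purely by the pointwise bound, $\int_{\mathfrak{m}}|f|^{4s_{0}-6}\le(\sup_{\mathfrak{m}}|f|)^{2s_{0}-6}\int_{\mathfrak{m}}|f|^{2s_{0}}$, which is legitimate because $4s_{0}-6\ge2s_{0}$. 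Writing $I=\int_{\mathfrak{m}}|f|^{2s_{0}}$, the first term becomes $\ll CI^{1/2}$ with
\[
C\ll y^{s_{0}-1/2}x^{(1-k)/2}(\log x)^{2+c_{0}/2-A'(s_{0}-3)}.
\]
Thus $I\ll CI^{1/2}+(\text{second term})$, whence $I\ll C^{2}+(\text{second term})$; since $s_{0}-3\ge1$, taking $A'$ (hence $A$) large makes $C^{2}\ll y^{2s_{0}-1}x^{1-k}(\log x)^{-A_{0}}$, completing the critical case. For $t>k^{2}+k+2$ one peels $\int_{\mathfrak{m}}|f|^{t}\le(\sup_{\mathfrak{m}}|f|)^{t-2s_{0}}\int_{\mathfrak{m}}|f|^{2s_{0}}$ and uses the pointwise bound again.

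The hard part is the bookkeeping that keeps Lemmas \ref{K} and \ref{MS} out of the same term: the first term is driven entirely by the clean, $x^{\varepsilon}$-free quantity $\mathcal{J}_{0}$ (Lemma \ref{Y}) together with the Matom\"aki--Shao pointwise bound, while Lemma \ref{K}, with its unavoidable $x^{\varepsilon}$, is confined to the second term where $y^{-\rho}$ neutralises it. Secondary points I would check are that the support $(x-y,x+y]$ of $f$ is reconciled with the length-$y$ conventions of the lemmas up to harmless constants (replacing $y$ by $2y$), that replacing $\sum(\log p)e(\alpha p^{k})$ by $\sum\Lambda(n)e(\alpha n^{k})$ when invoking Lemma \ref{MS} costs only $O(\sqrt{x}\log x)=o(y(\log x)^{-A'})$, and that the constraints $\rho\le t_{k}^{-1}$ and $\tfrac{1}{2-t_{k}\rho}\le\theta$ of Lemmas \ref{t2} and \ref{l1} hold for the chosen $\rho$.
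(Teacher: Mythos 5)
Your proposal is correct and follows essentially the same route as the paper's own proof: both apply Lemma \ref{l1} with $h=f$ and $\mathfrak{W}=\mathfrak{m}$, handle the first term via Lemma \ref{Y} (with $D=yx^{k-1}$) combined with the Matom\"aki--Shao pointwise minor-arc bound from Lemma \ref{MS}, and confine Lemma \ref{K} to the second term, where the $y^{-\rho}$ saving from Lemma \ref{t2} absorbs the $x^{\varepsilon}$ loss. The differences are purely organizational---you treat the critical exponent $t=k^{2}+k+2$ and peel upward, and you spell out details (the resolution $I\ll C^{2}+S$, the $\Lambda$ versus $\log p$ discrepancy, the arc-width compatibility check) that the paper uses implicitly.
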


\begin{proof}
Fix $A_{0}.$
Choose $B=A_{0}+c_{0}+10.$  Choose $A$  sufficiently large in terms of $B.$ Then Lemma \ref{MS} implies that for $y\geq x^{2/3+\varepsilon},$
\begin{equation}\label{e}
\sup_{\alpha\in\mathfrak{m}}|f(\alpha)|\leq y (\log x)^{-B}.
\end{equation}

Choose $h(\alpha)=f(\alpha)$ and $\mathfrak{W}=\mathfrak{m}$ in Lemma \ref{l1}.
Then we have
\[
K(t)\ll y^{2} (\log x)^{2} \mathcal{J}_{0}^{1/2} K(2t-6)^{1/2}+
y^{2-\rho+\varepsilon}K(t-2).
\]
{\bf (i)} For the  first term, we have
\[K(t)\ll y^{2} (\log x)^{2} \mathcal{J}_{0}^{1/2} K(2t-6)^{1/2}
\ll y^{2} (\log x)^{2} \mathcal{J}_{0}^{1/2} K(t)^{1/2} \max_{\alpha\in [0,1]}f(\alpha)^{(t-6)/2}.
\]
Hence
\[K(t)^{2}
\ll y^{4} (\log x)^{4} \mathcal{J}_{0} K(t) \max_{\alpha\in [0,1]}f(\alpha)^{(t-6)}.
\]
Choosing $D=yx^{k-1}$ in Lemma \ref{Y}, this gives
\[K(t)
\ll y^{4} (\log x)^{4+c_{0}} (y^{2}(yx^{k-1})^{-1})\max_{\alpha\in [0,1]}f(\alpha)^{(t-6)}.
\]
Then by (\ref{e}), we have
\[K(t)
\ll y^{t-1} x^{1-k}(\log x)^{-A_{0}}.
\]
{\bf (ii)} For the  second term, by the Vinogradov mean value theorem (see Lemma \ref{K})
\[
\int_{[0,1)}f^{s}(\alpha) \ \mathrm{d}\alpha \ll y^{s-1}x^{1-k+\varepsilon},\ \ s\geq k^{2}+k,
\]
we have
\[K(t)
\ll y^{t-3} x^{1-k+\varepsilon}y^{2-\rho+\varepsilon}
\ll y^{t-1-\rho+\varepsilon}x^{1-k}.\]
Choosing $\varepsilon>0$ sufficiently small to ensure that $\rho=4\varepsilon$ is admissible in Lemma \ref{t2}. Then we can complete the proof.
\end{proof}

By the standard argument, we can get
\[
E(N,s,x^{k-1}y)\ll (y^{s-1}x^{1-k})^{-2}K(2s).
\]
Then Theorem \ref{t3} can be deduced by the standard argument, (\ref{z}) and Proposition \ref{W1}.

\bigskip
$\mathbf{Acknowledgements}$
I am deeply grateful to the referee(s) 
for carefully reading the manuscript 
and giving corrections, useful suggestions. 
%This work was supported by NSFC(11871307).

\address{Wei Zhang\\ School of Mathematics and Statistics\\
               Henan University\\
               Kaifeng  475004, Henan\\
               China}
\email{zhangweimath@126.com}
%\address{Wei Zhang\\ School of Mathematics\\
%               Shandong University\\
%               Jinan, Shandong 250100\\
%               P. R. China}
%\email{zhangweimath@126.com}


\begin{thebibliography}{99}

\bibitem{B2}
J. Br\"{u}dern, {\it A sieve approach to the Waring-Goldbach problem I.} Ann. Sci. Ecole Norm. Sup. Paris {\bf28}(1995) 461-476.
\bibitem{B1}
J. Br\"{u}dern, {\it A sieve approach to the Waring-Goldbach problem II: on the seven cubes theorem.} Acta Arith. {\bf72}(1995) 211-227.
\bibitem{D}
D. Daemen,
{\it The asymptotic formula for localized solutions in Waring's problem and approximations to Weyl sums}.
Bull. Lond. Math. Soc. {\bf 42}(2010) 75-82.




\bibitem{H}
B. R. Huang, {\it
Exponential sums over primes in short intervals and applications in Waring-Goldbach problem.} Mathematika {\bf62}(2016)  208-523.


\bibitem{KW}
K. Kawada and T. D. Wooley, {\it On the Waring-Goldbach problem for fourth and fifth powers}. Proc. London Math. Soc. {\bf 83}(2001) 1-50.



\bibitem{KZ}
K. Kawada, L. L. Zhao,
 {\it The Waring-Goldbach problem for cubes with an almost prime}.
 Proc. Lond. Math. Soc. {\bf 119}(2019)  867-898.


\bibitem{K}
A.V.Kumchev, {\it
 On Weyl sums over primes in short intervals}, in Number Theory: Arithmetic in Shangri-La, Series on Number Theory and its Applications, Vol. 8 (World Scientific, Singapore, 2013), pp. 116-131.

\bibitem{K2}
A.V. Kumchev, T.Y. Li, {\it
Sums of almost equal squares of primes}. J. Number Theory  {\bf132}(2012)  608-636.


\bibitem{KL}
A. V. Kumchev, H. F. Liu, {\it
On sums of powers of almost equal primes}. J. Number Theory {\bf176}(2017) 344-364.



\bibitem{LLZ}
J. Y. Liu, G. S. L\"{u}, T. Zhan,
{\it Exponential sums over primes in short intervals}. Sci. China Ser. A  {\bf49} (2006)  611-619.


\bibitem{LL}
G. S. L\"{u}, H.X. Lao, {\it On exponential sums over primes in short intervals}. Monatsh. Math. {\bf151}(2007) 153-164.

%\bibitem{LZ}
%G. S.  L\"{u}, W. G. Zhai, {\it On the representation of large integers as sums of four almost equal squares of primes}. Ramanujan J.  {\bf18}(2009)  1-10.




%\bibitem{MMS}
%K. Mat\"{o}maki, J. Maynard, X. C. Shao, {\it
%Vinogradov's theorem with almost equal summands}, Proc. London. Math. Soc. 115(2017), 323-347.

\bibitem{MS}
K. Mat\"{o}maki, X. C. Shao, {\it
Discorrelation  between primes in short intervals and polynomial phase}. arXiv:1902.04708.


%\bibitem{RZ}
%X.M. Ren,  W. Zhang,  {\it On sums of biquadratic powers with almost equal primes.} Ramanujan J. {\bf 57} (2022)   71-78.











\bibitem{Sa}
J. Salmensuu, {\it
On the Waring-Goldbach problem with almost equal summands}. arXiv:1903.01824.


\bibitem{Va1}
R.C. Vaughan, {\it
The Hardy-Littlewood method,} section edition, Cambridge University Press,  Cambridge 1997.


%\bibitem{WMD}
%M. Wang, {\it
%Waring-Goldbach problem in short intervals.} arXiv:1912.02310.



\bibitem{WW}
B. Wei, T. D. Wooley, {\it
On sums of powers of almost equal primes.} Proc. London. Math. Soc. {\bf111}(2015)  1130-1162.

\bibitem{Yao1}
Y. J. Yao, Waring-Goldbach problems with almost equal variables and related problems, PhD Thesis, Shandong University (2015) (in Chinese).


\bibitem{Yao}
Y. J. Yao, {\it
Sums of nine almost equal prime cubes}. Front. Math. China {\bf9}(2014)  1131-1140.



\bibitem{YZ}
Y. J. Yao, M. Zhang, {\it
Waring-Goldbach problem for fourth powers with almost equal variables}. Lith. Math. J. {\bf 56}(2016)  572-581.


\bibitem{Zh}
L. L. Zhao, {\it
On the Waring-Goldbach problem for fourth and sixth powers.} Proc. London. Math. Soc. {\bf108}(2014)  1593-1622.


\end{thebibliography}
\end{document}